\documentclass[a4paper, 12pt]{article}
\usepackage[english]{babel}
\usepackage{xcolor}
\usepackage{xypic}
\usepackage[utf8]{inputenc}
\usepackage[T1]{fontenc}
\usepackage{amsmath}
\usepackage{amsthm}
\usepackage{mathtools}
\usepackage{amssymb}
\usepackage{tikz-cd}
\usepackage{bbm}
\usepackage{tensor}
\usepackage{amsfonts}
\usepackage{thmtools,xcolor}
\usepackage{hyperref}
\usepackage{cleveref}
\usepackage{mathrsfs}
\usepackage{enumitem}
\usepackage{subfiles}
\usepackage{imakeidx}
\usepackage{microtype}
\usepackage[toc,page]{appendix}
\usepackage{crossreftools}
\usepackage{graphicx}
\usepackage{numprint}
\usepackage{setspace}
\usepackage{accents}
\usepackage{emptypage}
\onehalfspacing
\allowdisplaybreaks[1]



\usepackage[a4paper]{geometry}
\geometry{left=3cm,right=3cm,top=3.5cm,bottom=3.5cm} 

\theoremstyle{plain}
\newtheorem{theorem}{Theorem}[section]
\newtheorem{cor}[theorem]{Corollary}
\newtheorem{lem}[theorem]{Lemma}
\newtheorem{prop}[theorem]{Proposition}

\theoremstyle{definition}

\newtheorem{dfn}[theorem]{Definition}
\newtheorem{rem}[theorem]{Remark}
\newtheorem{rems}[theorem]{Remarks}

\theoremstyle{remark}

\newcommand{\RNum}[1]{\uppercase\expandafter{\romannumeral #1\relax}}

\makeatletter
\providecommand*{\twoheadrightarrowfill@}{%
  \arrowfill@\relbar\relbar\twoheadrightarrow
}
\providecommand*{\twoheadleftarrowfill@}{%
  \arrowfill@\twoheadleftarrow\relbar\relbar
}
\providecommand*{\xtwoheadrightarrow}[2][]{%
  \ext@arrow 0579\twoheadrightarrowfill@{#1}{#2}%
}
\providecommand*{\xtwoheadleftarrow}[2][]{%
  \ext@arrow 5097\twoheadleftarrowfill@{#1}{#2}%
}
\makeatother

\makeatletter
\def\ind{\@ifnextchar[{\@with}{\@without}}
\def\@with[#1]#2{\mathrm{Ind}(#1,#2)}
\def\@without#1{\mathrm{Ind}(#1)}
\makeatother

\newcounter{para}[section]

\newcommand\KK[0]{K\! K}

\DeclareMathOperator\End{End}

\DeclareMathOperator\ch{Ch}

\DeclareMathOperator\cone{Cone}

\DeclareMathOperator\even{even}
\DeclareMathOperator\odd{odd}

\DeclareMathOperator\id{Id}

\DeclareMathOperator\cs{CS}

\newcommand{\R}{\mathbb{R}}

\newcommand{\C}{\mathbb{C}}
\newcommand{\Z}{\mathbb{Z}}

\newcommand{\w}{\omega}

\begin{document}
\title{Chern-Simons invariants in $\KK$-theory}
\author{Omar Mohsen}
\date{}
\maketitle
\abstract{For a unitary representation $\phi$ of the fundamental group of a compact smooth manifold, Atiyah, Patodi, Singer defined the so called $\alpha$-invariant of $\phi$ using the Chern-Simons invariants. In this article using traces on $C^*$-algebras, we give an intrinsic definition of an element in $\KK$ with real coefficients theory whose pullback by the representation $\phi$ is the $\alpha$-invariant.}
\section*{Introduction}
In \cite{MR0353327}, Chern and Simons defined differential forms associated to a flat vector bundle over a compact smooth manifold. Those invariants naturally live in De Rham cohomology.

Atiyah, Patodi, and Singer \cite{MR0397797,MR0397798,MR0397799} in their celebrated articles highlighted the connection between the Chern-Simons invariants and index theory. They transported the Chern-Simons invariants to $K$-theory. To this end, they defined the $K$-theory with coefficients in $\C/\Z$, and then using Atiyah-Hirzebruch theorem on the bijectivity of the Chern character they transported the Chern-Simons invariants to $K$-theory. The resulting element is the so-called $\alpha$-invariant of a flat vector bundle or equivalently of the holonomy representation of the fundamental group. The $\alpha$-invariant lives in the $K$-theory of the underlying manifold with coefficients in $\C/\Z$. The pairing (Kasparov product) of the $\alpha$-invariant with the class of a Dirac operator $[D]\in \KK^1(M,\C)$ gives the reduced $\eta$-invariant as proved in Atiyah, Patodi, and Singer \cite{MR0397797,MR0397798,MR0397799}.

 If $V$ is a flat vector bundle associated to a representation of the fundamental group of a compact manifold $M$, then the Atiyah-Hirzebruch theorem implies that the element $[V]-[\C^{\dim(V)}]$ in $K^0(M)$ is torsion. A property of the $\alpha$-invariant is that its boundary under Bockstein homomorphism is equal to $[V]-[\C^{\dim(V)}]$. 

Closely related, and in a sense more primitive invariants are the relative Chern-Simons invariants and the relative $\alpha$ invariant which are defined respectively in the De Rham cohomology with coefficients in $\C$ and in the $K$-theory with coefficients in $\C.$ These invariants are defined for flat vector bundles which are equipped with a trivialisation. The relation between the two is that when one takes the relative invariant modulo $\Z$, then the choice of a trivialisation disappears, and the relative invariant becomes the usual invariant. 

When the holonomy representation is unitary, all the different invariants stated above become either in $\R$ or $\R/\Z$. In this article, we restrict ourselves to the relative $\alpha$-invariant of trivialised unitary flat vector bundles.

It was suggested in \cite{MR0397799} that the $\alpha$-invariant should have an intrinsic $K$-theoretical definition that uses the theory of semi-finite Von Neuman algebras. This motivated research in this direction by many authors, e.g. \cite{MR3189427,MR2275011,MR1079841,MR1749950}, etc ...

We continue this line of research by constructing a universal classifying element in the $\KK$-theory of the classifying space of trivialised unitary flat vector bundles. An element directly defined in $\KK$-theory without passing through De Rham cohomology might shed some light on the interaction between Chern-Simons invariants and $\KK$-theory.
 
We follow Antonini, Azzali, Skandalis \cite{MR3189427} definition of $\KK$-theory with real coefficients. By using their work on the $\alpha$-inavraints \cite{MR3189427}, we construct an element in the equivariant $\KK$ theory of the groupoid $U_n\rtimes U_n^\delta$ defined by Kasparov \cite{MR918241} and Le Gall \cite{MR1686846} which when pulled back by the classifying map (seen as a generalised homomorphism in the sense of Hilsum-Skandalis \cite{MR925720}) of a trivialised unitary flat vector bundle $f:M\to U_n\rtimes U_n^\delta$ gives the relative $\alpha$-invariant.

The organisation of this article is as follows.
\smallskip

In  \Cref{sect Chern-Weil}, we recall Chern-Weil theory for tracial $C^*$-algebras. This section follows closely the presentation in the article by Fomenko and Mishchenko \cite{MR548506}.
\smallskip

In \Cref{chapter KK theory}, we recall the definition of real $\KK$-theory given by Antonini, Azzali, and Skandalis \cite{MR3189427}. We refer the reader to \cite{MR1686846,MR1325694,MR1121156,MR1656031} for more details on $\KK$-theory.
\smallskip

In \Cref{sect k theory dfn}, the definition of the Chern Simons invariants in $\KK$-theory is given.
\smallskip

In \Cref{sect global kk theory element}, we construct a more primitive element in equivariant $\KK$-theory, that is done for any compact group.
\section*{Acknowledgements}
I wish to thank my PhD advisor, G. Skandalis for his precious support, suggestions and remarks that were essential to this article. I would also like to thank the referee for his careful reading and very helpful suggestions. This work was supported by grants from Région Ile-de-France.
\section{Chern-Weil theory}\label{sect Chern-Weil}
In this section, we recall Chern-Weil theory for tracial $C^*$-algebras. This section follows closely the articles by Fomenko and Mishchenko \cite{MR548506} and by Simons and Sullivan \cite{MR2521641}. The material presented in this section form the basis of differential cohomology. Differential cohomology was defined by Cheeger and Simons \cite{MR827262}, and since it has been extensively studied, e.g. \cite{MR1076525,MR3873095,MR2443109,MR3529091,MR3462099,MR3289847,MR3897590,MR3852464,MR2231056,MR1724894,MR2664467,MR2357997,MR2129894}.

\bigskip

Let $M$ be a connected smooth manifold, $A$ a unital $C^*$-algebra, $P$ a finitely generated (f.g) projective right $A$-module. A \textit{smooth $A$-vector bundle $V$ with fiber $P$} is a smooth $1$-cocycle on $M$ with coefficients in the group of $A$-linear automorphisms $GL(P)$. We will denote by $\Gamma^\infty(V)$ the right $A$-module of smooth sections of $V$, by $\Omega^\cdot(M,V):=\Omega(M)\hat{\otimes}_{C^\infty(M)} \Gamma^\infty(V)$ the space of differential forms on $M$ with values in $V$. This is a graded module over $\Omega^\cdot(M,A)$.

\begin{dfn}An $A$-connection on $V$ is a $\C$-linear map $\nabla:\Gamma^\infty(V)\to \Omega^1(M,V)$ which satisfies Leibniz rule $$\nabla(sf)=\nabla(s)f+s\otimes df,\quad \forall s\in\Gamma^\infty(V),f\in C^\infty(M,A).$$
\end{dfn}
Like in the classical theory, a connection $\nabla$ extends to a $\C$-linear map $\nabla:\Omega^\cdot(M,V)\to \Omega^{\cdot}(M,V)$ satisfying Leibniz rule. Locally, if $U$ is an open set such that $V\simeq U\times P$, then an $A$-connection is locally a map $\nabla=d+L$, where $L\in \Omega^1(U,\End_A(P)).$ It follows that $\nabla^2$ is the left action by $dL+L^2$. Furthermore connections always exist by a partition of unity argument.

 Let $\tau:A\to \C$ be a finite trace such that $\tau(1)=1$. The trace $\tau$ extends to $M_n(A)$, by the formula $\tau(M)=\sum\tau(M_{i,i})$. The trace extends as well to $\End_A(P)$ for an $A$-projective module $P$ by using a complementary module $Q$, as follows $$\End_A(P)\subseteq \End_A(P\oplus Q)\simeq M_n(A)\xrightarrow\tau \C.$$ It is straightforward to verify that this extension doesn't depend on the choice of $Q$. It follows that if $\nabla$ is an $A$-connection, then for $k\geq 1$, the forms $\tau((dL+L^2)^k)$ glue together to form a complex valued $2k$-form on $M$, that will be denoted by $\tau(\nabla^{2k})$.

\bigskip

Let $V$ be an $A$-vector bundle, and $\nabla$ an $A$-connection on $V$. The Chern character is defined by the formula $$\ch_\tau(V,\nabla):=\exp(\frac{1}{2\pi i}\nabla^2)=\sum_{k=0}^\infty \frac{1}{k!(2\pi i)^k}\tau(\nabla^{2k})\in \Omega^{\even}(M).$$
\begin{rem}
The normalisation constants in the definition of Chern character and the Chern-Simons forms are not uniform across the literature. Some authors don't divide by powers of $\frac{1}{2\pi i}$. Authors divide by powers of $\frac{1}{2\pi i}$ for the Chern character to be a rational map.
\end{rem}
Let $\nabla_t$ be a $C^1$-path of $A$-connections. Locally, if $\nabla_t=d+L_t$, then $\dot{\nabla}_t=\dot{L}_t$ is hence well defined. The forms $\tau(\dot{L}_t\wedge(dL_t+L_t^2)^k)$ glue together to form a complex valued $2k+1$-form on $M$, that will be denoted by $\tau(\dot{\nabla}\wedge\nabla^{2k})$.

The so-called Chern-Simons forms are defined by the formula
\begin{equation}\label{dfn chernsimons forms}
\cs_\tau(V,\nabla_t):=\int_0^1\sum_{k=0}^\infty\frac{1}{k!(2\pi i)^k}\tau(\dot{\nabla_t}\wedge\nabla_t^{2k})\in \Omega^{\odd}(M) .
\end{equation}
By a direct local computation, one deduces that  \begin{equation}\label{chernsimons eqn}
d\cs_\tau(V,\nabla_t)=\ch_\tau(V,\nabla_1)-\ch_\tau(V,\nabla_0).
\end{equation}
Hence the following holds
\begin{prop}\label{chern form are closed}\label{prop of chern simons even}The Chern character is a closed differential form whose class is independent of the choice of the connection $\nabla$.
\end{prop}
\begin{proof}
To see that the form $\ch(V,\nabla)$ is closed one compares it locally to the trivial connection using \Cref{chernsimons eqn}.
\end{proof}
If $\tilde{\nabla}_t$ is another $C^1$-path of connections with same endpoint as $\nabla_t$, then in \cite{MR2521641}, complex valued differential forms $\cs_\tau(V,\tilde{\nabla}_t,\nabla_t)$ are defined such that $$d\cs_\tau(V,\tilde{\nabla}_t,\nabla_t)=\cs_\tau(V,\nabla_t)-\cs_\tau(V,\tilde{\nabla}_t).$$ It follows that modulo $d\Omega^{\even}(M)$, the forms $\cs_\tau(V,\nabla_t)$ only depend on the endpoint $\nabla_{0}$ and $\nabla_{1}$. Hence the notation $\cs_\tau(V,\nabla_1,\nabla_0)\in \Omega^{\odd}(M)/d\Omega^{\even}(M)$ is justified.
Given two connections $\nabla_0,\nabla_1$, then there is a preferred path $t\nabla_0+(1-t)\nabla_1$. One sees that in this case \Cref{dfn chernsimons forms} becomes 
\begin{equation}\label{chern simons formula using Ak}
\cs_\tau(\nabla_1,\nabla_0)=\sum_{k=0}^\infty \frac{(-1)^kk!}{(2\pi i)^{k+1}(2k+1)!}\tau((\nabla_1-\nabla_0)^{2k+1})
\end{equation}

Let us recall the tensor product of bundles: if $A$ and $B$ are $C^*$-algebras, $P$ a f.g projective $A$-module, $Q$ a f.g projective $B$-module, then $P\otimes_{max} Q$ is a f.g projective $A\otimes_{\max}B$-module. It follows that if $V$ is a $A$-bundle and $W$ is a $B$-module then the maximal tensor product $V\otimes_{max} W$ is a well defined smooth $A\otimes_{max}B$-vector bundle. Same holds for minimal tensor product.

\begin{prop}[\cite{MR2521641}]\label{cherncharacterschernSimonsIdentites} Let $V,W$ be $A$-vector bundles and $\nabla^{0}_V,\nabla^1_V,\nabla^2_V,\nabla^0_W,\nabla^1_W$ be $A$-connections on the indicated bundles. Then, we have  \begin{enumerate}

\item $\ch_\tau(V\oplus W,\nabla^0_V\oplus\nabla^0_W)=\ch_\tau(V,\nabla^0_V)+\ch_\tau(W,\nabla^0_W)$
\item $\ch_\tau(V\otimes W,\nabla^0 _V\otimes\nabla^0_W)=\ch_\tau(V,\nabla^0_V)\wedge\ch_\tau(W,\nabla^0_W)$
\item $\cs_\tau(\nabla^0_V,\nabla^1_V)+\cs_\tau(\nabla^1_V,\nabla^2_V)=\cs_\tau(\nabla^0_V,\nabla^2_V)$
\item $\cs_\tau(\nabla^0_V\oplus\nabla^0_W,\nabla^1_V\oplus\nabla^1_W)=\cs_\tau(\nabla^0_V,\nabla^1_V)+\cs_\tau(\nabla^0_W,\nabla^1_W)$
\item $$\cs_\tau(\nabla^0_V\otimes\nabla_W,\nabla^1_V\otimes\nabla_W)=\ch_\tau(\nabla_W)\cs_\tau(\nabla^0_V,\nabla^1_V),$$  where the product $\ch_\tau(\nabla_W)\cs_\tau(\nabla^0_V,\nabla^1_V)$ is well defined module exact forms because $\ch_\tau(\nabla_W)$ is closed.
\item\begin{align*}
\cs_\tau(\nabla^0_V\otimes\nabla^0_W,\nabla^1_V\otimes\nabla^1_W)=\ch_\tau(\nabla^0_V)\cs_\tau(\nabla^0_W,\nabla^1_W)\\+\ch_\tau(\nabla^1_W)\cs_\tau(\nabla^0_V,\nabla^1_V)
\end{align*}
\end{enumerate}
\end{prop}

The odd Chern-character (cf. \cite{MR1231957}) is defined as follows. Let $u$ be an $A$-linear automorphism of an $A$-vector bundle $V$, and $\nabla$ an $A$-connection on $V$, then the odd Chern character is defined by the formula
\begin{equation}\label{chern char odd}
\ch_\tau(u,\nabla)=\sum_{k=0}^\infty\frac{(-1)^kk!}{(2k+1)!(2\pi i)^{k+1}}\tau((u^{-1}\nabla u-u)^{2k+1}),
\end{equation}
where $u^{-1}\nabla u$ is the $A$-connection on $V$ defined on $V$ by the formula \begin{align*}
\Omega^*(M,V)\to \Omega^{*+1}(M,V),\quad s\to u^{-1}\nabla(us)
\end{align*}
\begin{cor}\label{chern simons=oddchern char}Let $\nabla$ be an $A$-connection on an $A$-vector bundle $V$ and $T:V\to V$ an $A$-linear automorphism, then we have \begin{equation}
\cs_\tau(T^{-1}\nabla T,\nabla)=\ch_\tau(T,\nabla)
\end{equation}
\end{cor}
\begin{proof}
This follows from \Cref{chern simons formula using Ak} and \Cref{chern char odd}.
\end{proof}
\begin{prop}[\cite{MR2521641}]
The form $\ch_\tau(u)$ is closed, and its class depends only on the homotopy class of $u$.
\end{prop}

\begin{theorem}[Atiyah-Hirzebruch, see \cite{MR0370578}]\label{Atiyah Hirz}Let $M$ be a compact smooth manifold, then the Chern character $\ch:K^*(M)\otimes \C\to H^{*}(M,\C)$ is a ring isomorphism.
\end{theorem}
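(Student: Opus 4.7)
The plan is to view both sides as $\Z/2$-graded cohomology theories on finite CW complexes (a category that includes compact smooth manifolds up to homotopy equivalence) and to compare them by means of the Chern character. First I would recall that $K^*(-)\otimes_\Z\C$ inherits homotopy invariance, excision, the long exact sequence of a pair and Bott periodicity from topological $K$-theory (tensoring by $\C$ preserves exactness since $\C$ is flat over $\Z$), while $\bigoplus_n H^{2n+i}(-,\C)$ likewise assembles into a $\Z/2$-graded cohomology theory by parity.

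Second, I would verify that the Chern character, defined on vector bundles by $\ch_\tau(V,\nabla)=\tau(\exp(\nabla^2/2\pi i))$ in even degree and by \cref{chern char odd} in odd degree, is a natural ring homomorphism $\ch:K^i(M)\otimes\C\to\bigoplus_n H^{2n+i}(M,\C)$. The ring structure, naturality, and well-definedness on $K$-classes are contained in \cref{chern form are closed,cherncharacterschernSimonsIdentites}. The one delicate normalization to pin down is that $\ch$ sends the Bott generator of $\widetilde{K}^0(S^2)$ to the fundamental class in $H^2(S^2,\C)$; this reduces to a direct Chern-Weil computation on the tautological line bundle over $\C P^1$ with its Fubini-Study connection, and is what fixes the constant $1/(2\pi i)$.

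Third, I would finish by induction on the cells of a finite CW decomposition of $M$, comparing the two Mayer-Vietoris sequences via the five lemma. The base case is a point, where $K^0(\mathrm{pt})\otimes\C\cong\C$ and $K^1(\mathrm{pt})=0$ match $H^0(\mathrm{pt},\C)\cong\C$ and $H^{\odd}(\mathrm{pt},\C)=0$, and $\ch$ is the identity on the $\C$-summand. The inductive step attaches a cell $D^n$ along $\phi:S^{n-1}\to M^{(n-1)}$; by excision, the claim for the pair $(D^n,S^{n-1})$ reduces to the case of spheres, which iterated suspension combined with Bott periodicity deduces from the $S^0$ and $S^2$ cases settled above.

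The main obstacle is the Bott normalization step: every other ingredient is either an axiomatic property of cohomology theories or is already present in the Chern-Weil machinery recalled in Section~1. Once it is established that $\ch$ intertwines the Bott periodicity isomorphism with the suspension isomorphism in cohomology and sends the Bott class to the fundamental class of $S^2$ with the correct normalization, the five-lemma induction proceeds mechanically, and multiplicativity follows from \cref{cherncharacterschernSimonsIdentites} by checking on generators.
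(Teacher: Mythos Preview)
Your outline is the standard and correct proof of the Atiyah--Hirzebruch theorem: both sides are $\Z/2$-graded cohomology theories, the Chern character is a natural multiplicative transformation between them, and once the normalization on the Bott class of $S^2$ is checked the five-lemma induction over a CW decomposition forces the isomorphism. There is nothing to fault in the argument as sketched.

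However, there is nothing to compare against: the paper does not prove this theorem. It is stated with a reference \cite{MR0370578} and used as a black box (to define the $\alpha$-invariant via $\ch^{-1}$ and later to conclude \cref{main prop}). So your proposal is not an alternative to the paper's proof but a reconstruction of the classical argument that the paper simply cites. If you want to include it, it stands on its own; just be aware that the references to \cref{chern form are closed} and \cref{cherncharacterschernSimonsIdentites} only supply well-definedness and multiplicativity of $\ch$ at the level of differential forms, and you still need the purely topological input (naturality with respect to continuous maps, compatibility with suspension and the connecting homomorphism) that those propositions do not address.
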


Every finitely generated projective $A$-module can be given the structure of a $C^*$-module over $A$ (see \cite{MR1325694} for more details). We will call such a structure a $C^*$-metric. A $C^*$-metric on an $A$-bundle $V$ is a smooth family of $C^*$-metrics on each fiber. If $g$ is a $C^*$-metric, $\nabla$ an $A$-connection on $V$, then $\nabla^*$ is an $A$-connection defined by the equation for $X$ a vector field, $s,s'\in \Gamma^\infty(V)$, $$Xg(s,s')=g(\nabla_Xs,s')+g(s,\nabla^*_Xs').$$
A connection $\nabla$ is called unitary if $\nabla^*=\nabla.$ 

The following equation follows from \cref{chern simons formula using Ak}, \begin{equation}\label{csadjoint}
\overline{\cs(\nabla_0,\nabla_1)}=\cs(\nabla_0^*,\nabla_1^*).
\end{equation}

\bigskip

Let $\Gamma$ be the fundamental group of $M$, $\tilde{M}$ its universal cover, $\phi:\Gamma\to GL(P)$ a representation. One can define an $A$-vector bundle by $\tilde{M}\times_\Gamma P$.
\begin{dfn}\label{dfn flat bundle C} A flat structure on an $A$-vector bundle on $M$ is the choice of an $A$-vector bundle isomorphism to $\tilde{M}\times_\Gamma P$ for some representation $\phi:\Gamma\to GL(P)$, and for some finitely generated projective $A$-module $P$. Furthermore if $P$ is endowed with the structure of a $C^*$-module such that $\phi$ is a unitary representation, then we say that $V$ is unitary flat,

A flat vector bundle is a vector bundle equipped with a flat structure.
\end{dfn}
\begin{prop}\label{flat vector bundle eqv}A flat structure on an $A$-vector bundle can be equivalently given by the choice of a flat $A$-connection, that is an $A$-connection $\nabla$ such that $\nabla^2=0$. Furthermore, the bundle is unitary flat if and only if the connection is unitary with respect to a given $C^*$-metric on the vector bundle. 

The representation $\phi:\Gamma\to GL(P)$ associated to $\nabla$ is called the \textit{holonomy representation} of $\nabla$.
\end{prop}
\begin{dfn}A trivial $A$-connection is a flat $A$-connection $\nabla$ whose holonomy is trivial.
\end{dfn}
\begin{rem}It is clear from the definition that giving a trivial connection on a bundle is the same as giving a trivialization of the bundle.\end{rem}
Let $\nabla_0,\nabla_1$ be flat $A$-connections on an $A$-vector bundle $V$. It follows immediately from the definition of the Chern character that $\ch_\tau(\nabla_0)=\ch_\tau(\nabla_1)=\tau(Id_P)$, where $Id_P\in \End_A(P)$ is the identity morphism. It follows from \Cref{chernsimons eqn} that $\cs_\tau(\nabla_1,\nabla_0)$ gives a cohomology class in $H^{\odd}(M,\C)$.

\begin{dfn}The $\alpha$-invariant of $(V,\nabla_1,\nabla_0)$ is defined as $$\alpha_{V,\nabla_1,\nabla_0}=\ch^{-1}(\cs_\tau(\nabla_1,\nabla_0))\in K^1(M,\C).$$
\end{dfn}

From now on we restrict our selves to the case of unitary representations. In this case the imaginary part of the $\alpha$ invariant is zero as can be immediately seen from \Cref{chern simons formula using Ak}. In general this holds for connections that are selfadjoint with respect to a nondegenerate sesquilinear forms introduced  by Hilsum and Skandalis \cite{MR1142484} . 

A nondegenerate sesquilinear form is a $\C$-bilinear form $Q:P\times P\to A$ such that $Q(p,q)=Q(q,p)^*$, $Q(p,qa)=Q(p,q)a$, and that there exists a bijective $A$-linear operator $T:P\to P$ such that $Q(\cdot ,T\cdot)$ is a $C^*$-metric. It is proved in \cite{MR1142484} that $T$ can be chosen so that $T^2=1$.
\begin{prop}Let $V$ be an $A$ vector bundle with fiber $P$, $\nabla$ a flat connection whose holonomy is $\phi$ and $\nabla_{triv}$ a trivial connection on $V$. If there exists a non degenerate sesquilinear form $Q$ on $V$ such that $\phi(\Gamma)\subseteq U(Q)$, then the imaginary part of $\alpha_{\nabla,\nabla_{triv}}$ vanishes. Here $U(Q)$ denotes the group of isometries of $Q$.
\end{prop}
\begin{proof}
Let $T$ an operator acting on $P$, and $g$ a $C^*$metric such that $Q(\cdot,\cdot)=g(\cdot,T\cdot)$ and $T^2=1$. We will denote by $\nabla^*$ the adjoint of $\nabla$ with respect to $g$. Let $s,s'\in\Gamma(V)$ be two sections and $X\in \Gamma(TM)$ a vector field. Then \begin{align*}
Q(\nabla_Xs,s')=g(\nabla_Xs,Ts')&=X\cdot g(s,Ts')-g(s,\nabla^*_Xs')\\
&=X\cdot Q(s,s')-Q(s,T\nabla^*_X Ts')
\end{align*}
It follows that $\nabla=T\nabla^* T$ and hence the form $\w=\nabla-\nabla^*$ anticommutes with $T$. Hence \begin{equation}\label{eqn used proof chern simons imag trivial}
\tau(\omega^k)=\tau(\w^k T^2)=\tau((-1)^kT\w^k T)=(-1)^k\tau(\w^k).
\end{equation} By \cref{csadjoint}, one has \begin{align*}
\overline{\cs(\nabla,\nabla_{triv})}=\cs(\nabla^*,\nabla_{triv}^*)=\cs(\nabla^*,\nabla_{triv}).
\end{align*} It follows that the imaginary part of Chern-Simons form $\cs(\nabla,\nabla_{triv})$ is equal to $\frac{1}{2i}\cs(\nabla,\nabla^*)$. The result then follows from \Cref{chern simons formula using Ak} and \Cref{eqn used proof chern simons imag trivial}.
\end{proof}
\section{$\KK$-theory with real coefficients}\label{chapter KK theory}Let $G$ be a Lie groupoid. We will use equivariant $\KK$-théorie as defined in \cite{MR918241,MR1686846}. We recall that if $A$, $B$ are separable $C^*$-algebras, $G$ a Hausdroff Lie groupoid, the group $KK_G^*(A,B)$ is an abelian group defined by cycles $[E,\pi,F]$, where $E$ is a $G$-equivariant $B$-module, $\pi$ is a $G$-equivariant action of $A$ on $E$, $F\in \mathcal{L(E)}$ a bounded operator with some compactness conditions.

\begin{rem}\label{final rem}It follows from the definition of $KK_G^*(A,B)$, that if $\Gamma$ is a discrete group acting on a compact space $X$, $A$ a $C(X)\rtimes \Gamma$-$C^*$-algebra then $$KK_{X\rtimes \Gamma}^*(C(X),A)=KK_\Gamma^*(\C,A).$$ This holds because a cycle $[E,\pi,F]$ in $KK_{X\rtimes \Gamma}^*(C(X),A)$ is just a cycle in $KK_\Gamma^*(\C,A)$ with an added action of $C(X)$ on $E$ on the left, but this action is already implemented by the right action of $A$ on $E$.
\end{rem}

 The $\KK^*_G(A,B)$ group is usually defined only for separable $C^*$-algebras only. We follow the remarks given by Skandalis \cite{MR775997} in order to define $\KK^*_G(A,B)$ for arbitrary $C^*$-algebras $A$ and $B$ by $$\KK^*_G(A,B):=\varprojlim_D\varinjlim_{C}\KK^*_G(D,C)$$ where the projective and injective limits are over all separable $C^*$-algebras $D$ and $C$ with morphisms $\psi:A\to D$ and $\phi:C\to B$.

When the groupoid $G$ is not second countable (but $G^0$ is always assumed second countable) then we define equivariant $KK$-theory by $$\KK^*_G(A,B):=\varprojlim_{H}\KK^*_H(A,B)$$ where the projective limit is over all second countable Lie subgroupoids.

\begin{dfn}Let $\phi:A\to B$ be a homomorphism. The mapping cone of $\phi$ is the $C^*$-algebra $$\cone(\phi)=\{(a,f)\in A\times C_0([0,1),B):f(0)=\phi(a)\}.$$
\end{dfn}

\begin{prop}\label{Morph to exact seq k theory}
The following sequence is exact
\begin{equation}
0\to SB\to \cone(\phi)\to A\to 0
\end{equation}
Its boundary morphism in $\KK^1(A,SB)=\KK^0(A,B)$ is equal to $\phi_*$. Therefore the following sequence is exact.
\[
\begin{tikzcd}[row sep=2 em,column sep=1em]
K_0(A)\arrow{r}{\phi_*}&K_0(B)\arrow[r] &K_1(\cone(\phi))\arrow{d} \\
   K_0(\cone(\phi))\arrow[u]&K_1(B)\arrow[l]&K_1(A)\arrow{l}{\phi_*}\\
\end{tikzcd} \]
\end{prop}

\begin{dfn}Let $\mathcal{C}$ be the category whose objects are separable unital $C^*$-algebras endowed with a tracial state and whose morphisms are $*$-morphisms preserving the trace.
\end{dfn}
\begin{dfn}[\cite{MR3419768}]Let $G$ be a Lie groupoid, and let $A$ and $B$ be two $G$-$C^*$-algebras. Equivariant $\KK$-theory with real coefficients is defined by $$\KK_{G,\R}^*(A,B):=\varinjlim_{C\in \mathcal{C}}\KK^*_{G}(A,B\otimes C).$$ Here the groupoid $G$ acts on $C$ trivially.

Similarly the equivariant $\KK$-theory with $\R/\Z$ coefficients is defined by $$\KK_{G,\R/\Z}^*(A,B)=\varinjlim_{C\in \mathcal{C}}\KK^*_{G}\left(A,B\otimes \cone(\C\to C)\right).$$
\end{dfn}
This definition is justified by the Kunneth formula and following proposition.
\begin{prop}Let $A$ be a $C^*$-algebra, then the exact sequence in \cref{Morph to exact seq k theory} induces under injective limit the exact sequence \[
\begin{tikzcd}[row sep=2 em,column sep=1em]
\KK^0(\C,A)\arrow{r}{}&\KK^0_{\R}(\C,A)\arrow[r] &\KK^0_{\R/\Z}(\C,A)\arrow{d} \\
   \KK^1_{\R/\Z}(\C,A)\arrow[u]&\KK^1_{\R}(\C,A)\arrow[l]&\KK^1(\C,A)\arrow{l}{}\\
\end{tikzcd} \]
\end{prop}

 \begin{theorem}[Künneth formula, see for example \cite{MR1656031}]\label{Kunneth formula}
Let $A$ be a separable $C^*$-algebra in the bootstrap category and $B$ any $C^*$-algebra then the following sequences are exact

\begin{equation*}
 0\to K_*(A)\otimes K_*(B)\to K_*(A\otimes B)\to \mathrm{Tor}^1_\Z(K_*(A),K_*(B))\to0
\end{equation*}
Where the first map is of degree 0 and the second is of degree $1$.
\end{theorem}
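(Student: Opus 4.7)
The statement is the classical Künneth formula of Rosenberg--Schochet, so the natural proof is the bootstrap induction. I would organise the argument along the following lines.

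First, I would construct the natural map $\alpha_{A,B}:K_*(A)\otimes K_*(B)\to K_*(A\otimes B)$ out of the external Kasparov product: given classes $x\in KK^*(\C,A)=K_*(A)$ and $y\in KK^*(\C,B)=K_*(B)$, the product $x\otimes_\C y\in KK^*(\C,A\otimes B)=K_*(A\otimes B)$ is bi-additive and descends to the tensor product. Then I would verify the Künneth sequence by hand in the base case $A=\C$: since $K_*(\C)=\Z$ is concentrated in degree $0$ and is $\Z$-free, the Tor-term vanishes and $\alpha_{\C,B}$ is visibly an isomorphism.

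Next, fix $B$ and let $\mathcal{N}=\mathcal{N}(B)$ denote the class of separable $C^*$-algebras $A$ for which the Künneth sequence is short exact and natural in $A$. The plan is to show that $\mathcal{N}$ contains $\C$ (done), is closed under $KK$-equivalence, under suspension, under countable direct limits with injective structure maps, and under extensions. Closure under $KK$-equivalence is formal once one notes that both sides are $KK$-functorial in $A$ and that $\alpha_{A,B}$ is natural with respect to $KK$-morphisms. Closure under direct limits follows from the fact that $K$-theory, the tensor product of abelian groups, $\text{Tor}^1_\Z$, and $A\mapsto A\otimes B$ all commute with countable direct limits (using exactness of $\varinjlim$), plus the five lemma applied termwise. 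Suspension closure follows from Bott periodicity and the identification $K_*(SA\otimes B)=K_{*-1}(A\otimes B)$.

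The main technical step, and the one I expect to be the serious obstacle, is closure under extensions. Given a short exact sequence $0\to J\to A\to A/J\to 0$ of separable $C^*$-algebras with $J,A/J\in \mathcal{N}$, one obtains a six-term exact sequence in $K$-theory for $A$ and, after tensoring with $B$, another one for $A\otimes B$. The difficulty is that $-\otimes_\Z K_*(B)$ is not exact, so one cannot simply apply the five lemma to the tensored sequence; instead one must splice in the $\text{Tor}^1_\Z(-,K_*(B))$ terms to obtain a long exact sequence of graded abelian groups, and then compare it to the six-term sequence for $K_*(A\otimes B)$ via the natural maps $\alpha_{J,B}$, $\alpha_{A/J,B}$. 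Once this diagram is shown to commute (which requires care because the boundary map in $K$-theory must be identified with the boundary map coming from the snake lemma on the Tor--$\otimes$ sequence), the five lemma applied to the resulting twelve-term ladder gives the conclusion. Finally I would invoke the definition of the bootstrap category as the smallest class containing $\C$ and stable under the operations listed above to conclude $\mathcal{N}\supseteq \mathrm{Bootstrap}$, which finishes the proof.
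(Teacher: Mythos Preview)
The paper does not prove this theorem at all; it is stated with a citation to the literature (``see for example \cite{MR1656031}'') and used as a black box. There is therefore no proof in the paper to compare your proposal against.

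On its own merits, your outline follows the bootstrap-induction strategy and is broadly reasonable, but the step you flag as the serious obstacle is genuinely not carried out: you never construct the map $K_*(A\otimes B)\to \mathrm{Tor}^1_\Z(K_*(A),K_*(B))$, and without it there is no ``twelve-term ladder'' to which the five lemma can be applied. The standard Rosenberg--Schochet argument avoids this difficulty by a different route: one first proves that $\alpha_{A,B}$ is an isomorphism whenever $K_*(A)$ is free (so Tor vanishes and the statement reduces to an easy check), and then for arbitrary $A$ in the bootstrap class one builds a \emph{geometric resolution}, i.e.\ a short exact sequence of $C^*$-algebras whose outer terms have free $K$-theory and which realises a free resolution of $K_*(A)$ on $K$-theory. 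The six-term sequence of that extension, tensored with $B$, then \emph{produces} the Tor map and the K\"unneth sequence simultaneously. Your inductive scheme could be completed, but only after first constructing the Tor map by some such device; as written, the extension step assumes the existence of the very object the theorem is meant to provide.
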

 \begin{prop}[{\cite{MR3189427}}]Let $M$ be a compact smooth manifold, then $$\KK^*_{\R}(\C,C(M))=K^*(M,\R).$$
\end{prop}

Theorems 6.3, 6.4, 7.1, 7.2, 7.3, 7.4, 7.5, 7.6 and propositions 6.2, 6.3, 7.1, 7.2, and corollaire 7.1  in \cite{MR1686846} pass through the direct limit to $\KK_{G,\R}$. In particular Kasparov product exists $$\KK^i_{G,\R}(A,B)\times \KK^j_{G,\R}(B,C)\to \KK^{i+j}_{G,\R}(A,C).$$ Functoriality and Morita equivalence hold that is if $f:G\to G'$ is a generalised morphism, in the sense of \cite{MR925720}, of groupoids then $$f^*:\KK_{G',\R}^*(A,B)\to \KK^*_{G,\R}(f^*A,f^*B)$$ is well defined, moreover if $f$ is a Morita equivalence, then $f^*$ is an isomorphism.

If $\tau:A\to \C$ is a trace on a $C^*$-algebra, then $\tau$ defines naturally a class $[\tau]$ in $\KK^0_{\R}(A,\C)$. 

More generally if $\Gamma$ is a discrete group acting on $A$ which preserves the trace $\tau$, then we will define a class $[\tau]$ in $KK_{\Gamma,\R}^0(A,\C)$. The class $[\tau]=x\otimes y\otimes z$ is defined as the product of the following three elements $$x\in KK^0_{\Gamma}(A,A_1),y\in KK^0_{\Gamma}(A_1,A_2),z\in KK_{\Gamma,\R}(A_2,\C),$$ where $A_1=A\rtimes \Gamma$ equipped with the $\Gamma$-inner action, and $A_2=A\rtimes \Gamma$ with the trivial action.\begin{enumerate}
\item The element $x$ is the class of the inclusion map $i:A\to A_1$ which is $\Gamma$-equivariant.
\item Since the $\Gamma$ action on $A_1$ is inner, $A_1$ is Morita equivalent to $A_2$. The element $y$ is class of this Morita equivalence.
\item Since the trace $\tau$ is $\Gamma$-invariant, the map $$\tau_{\rtimes\Gamma}(\sum_{\gamma}a_{\gamma}\gamma)=\tau(a_{e})$$ is a trace on $A_2$. The element $z$ is then the class of $\tau_{\rtimes \Gamma}$ in $KK^0_{\Gamma,\R}(A_2,\C).$
\end{enumerate}
We will need this construction later on, so let us summarise the previous construction in the following proposition.
\begin{prop}\label{KK theory inclusion- morita then R KK}
Let $A$ be a $C^*$-algebra, , $\Gamma$ a discrete group acting on $A$ which preserves $\tau$. Using previous notations, the trace $\tau$ defines a class $[\tau]=x\otimes y\otimes z$ in $KK^0_{\Gamma,\R}(A,\C)$.
\end{prop}

\section{$\KK$-theoretic definition of $\alpha$-invariants}\label{sect k theory dfn}
In this section, we recall in \Cref{main prop} the description of the Chern-Simons invariants due to Antonini, Azzali and Skandalis. To this end  \Cref{prop chern simons M T V times W} will be the main step. We extend \Cref{main prop} in  \Cref{prop chern simons M T V times W 2} to the noncommutative case.

\bigskip

\begin{theorem}[\cite{MR3189427}]\label{prop chern simons M T V times W}Let $V$ be a $\C$-vector bundle over a manifold $M$, $\nabla_V$ a unitary flat connection, and $\nabla_{triv}$, a trivial connection. There exists a unital $C^*$-algebra $A$ equipped with a trace $\tau$ such that $\tau(1)=1$, and a unitary flat $A$-bundle $(W,\nabla_W)$ whose fiber is equal to $A$, and an isomorphism $T:V\otimes W\to V\otimes W$ such that 
\begin{equation}\label{eqn chern simons T interwing Vtimes W}
T^{-1}(\nabla_{triv} \otimes \nabla_W)T=\nabla_V\otimes \nabla_W.
\end{equation}
\end{theorem}
The following is the key theorem of this article.
\begin{theorem}\label{main prop}Following the notation in \cref{prop chern simons M T V times W}, let $[T]\in KK^1(\C,C(M)\otimes A)$ be the class of a map $T$ which satisfies \cref{eqn chern simons T interwing Vtimes W} and $[\tau]\in KK^0_{\R}(A,\C)$ the element in real $\KK$ theory defined by the trace $\tau$. The Kasparov product $[T]\otimes_A[\tau]\in \KK^1_\R(\C,C(M))=K^1(M,\R)$ is equal to the $\alpha$ invariant $\alpha_{\nabla,\nabla_{triv}}$. In particular it is independent of the choice of $A$ and $W$.
\end{theorem}
\begin{proof}
By \Cref{chern simons=oddchern char}, we have \begin{align*}
\ch_\tau([T])=&[\cs_\tau(T^{-1}(\nabla_{triv}\otimes\nabla_{W})T,\nabla_{triv}\otimes\nabla_W]\\
=&[\cs_\tau(\nabla_V\otimes\nabla_W,\nabla_{triv}\otimes\nabla_W)]\\
=&[\cs_\tau(\nabla_V,\nabla_{triv})][\ch_\tau(\nabla_W)]\\
=&\ch(\alpha_{V,\nabla,\nabla_{triv}})\tau(1)=\ch(\alpha_{V,\nabla,\nabla_{triv}})
\end{align*}
It follows that the class $[T]\otimes_A[\tau]$ in $K^1(M,\R)$ is equal to $\alpha_{\nabla,\nabla_{triv}}$ 
\end{proof}
\begin{rems}
\begin{enumerate}
\item In general, it is impossible to find a commutative algebra $A$ satisfying \Cref{prop chern simons M T V times W}. Because if such an algebra exists, the Chern-Simons invariants become rational by the rationality of the Chern-character on locally compact spaces which doesn't hold in general.
\item The proposition is in general false for non unitary flat connections, because  in the case where it holds, the imaginary part of Chern-Simons invariant is equal to $0$.
\end{enumerate}
\end{rems}
In the next proposition, we extend \cref{prop chern simons M T V times W} to an arbitrary noncommutative $C^*$-algebra other than $M_n(\C)$.
\begin{prop}\label{prop chern simons M T V times W 2}Let $A$ be a unital $C^*$-algebra, $V$ a unitary $A$-flat vector bundle. There exists a unital $C^*$-algebra $B$, a $*$-morphism $i:A\otimes C^*_r\Gamma\to B$ such that if $W$ denotes Mishchenko's universal $C^*_r\Gamma$-bundle, then there exists an isomorphism preserving the flat structure $$T:i_*((M\times A)\otimes W)\to i_*(V\otimes W),$$ where $M\times A$ is the trivial $A$-bundle over $M$ with fiber $A$. Furthermore if $\tau_A$ is a tracial state on $A$, then a tracial state $\tau_B$ on $B$ is naturally defined such that $\tau_B(i(a\otimes \gamma))=\tau_A(a)\delta_{e}(\gamma).$ 
\end{prop}
\begin{proof}
Let $S^1$ be the unit circle, $z\in C(S^1)$ denote the identity map. The free product $A\star_\C C(S^1)$ can be described as the universal unital $C^*$-algebra that is equipped with a $*$-morphism $i:A\to A\star_\C C(S^1)$ and a unitary $z\in  A\star_\C C(S^1)$. In other words, if $B$ is a $C^*$-algebra with a $*$-morphism $j:A\to B$ and a unitary $w\in B$, then there exists a unique $*$-morphism $\phi:A\star_\C C(S^1)\to B$ such that $\phi\circ i=j$ and $\phi(z)=w$. See \cite{MR799593} for more details on free products.

Let $u\in A$ be a unitary, then $u^{-1}z$ is a unitary in $A\star_\C C(S^1)$, hence by the universality of $A\star_\C C(S^1)$, there exists a unique $*$-morphism $$\phi_u:A\star_\C C(S^1)\to A\star_\C C(S^1)$$ such that $\phi_u(a)=a$, and $\phi_u(z)=u^{-1}z$. By uniqueness, one has $\phi_u\circ\phi_v=\phi_{uv}$. Since $\phi_{1}=\id$, it follows that $\phi_u$ is an automorphism for every $u$.

Let $\psi:\Gamma\to U(A)$ be the holonomy representation of $V$. The group $\Gamma$ acts on $A\star_\C C(S^1)$ by the morphisms $\phi_{\psi(\gamma)}$ for $\gamma\in \Gamma.$

 Let $B=(A\star_\C C(S^1))\rtimes_r \Gamma$, $i$ the natural $*$-morphism given by $$i:A\otimes C_r^*\Gamma\to B,\quad i(a\otimes \gamma)=a\gamma.$$ The map $i$ is a $*$-morphism because $\phi_u$ fixes $A$ for any unitary $u$. The unitary $z\in A\star_\C C(S^1)\subseteq B$ satisfies the following equation $$i(1\otimes \gamma)zi(1\otimes \gamma^{-1})=\gamma z\gamma^{-1}=\phi_{\psi(\gamma)}(z)=\psi(\gamma)^{-1}z,$$ which means that $z$ defines an isomorphism $i_*((M\times A)\otimes W) \to i_*(V\otimes W)$ which preserves the flat structure.
 
 \bigskip

Let $\tau_A:A\to\C$ be a tracial state. We will denote by $\ker(\tau_A)\subseteq A$ the kernel of $\tau_A$. By \cite[{section 1}]{MR799593}, the algebra $A\star_\C C(S^1)$ admits a finite trace $\tau=\tau_A\star \int_{S^1}$. This trace is defined as the unique trace satisfying the following properties  \begin{enumerate}
\item If $a\in A$, then $\tau(a)=\tau_A(a)$
\item If $k\in \Z^*$, then $\tau(z^k)=0$
\item If $k\geq 1$, $a_1,\dots a_k\in \ker(\tau_A)\subseteq A$ and $l_1,\dots l_k\in \Z^*$, then $$\tau(a_1z^{l_1}a_2z^{l_2}\cdots a_kz^{l_k})=0.$$
\end{enumerate}
\begin{lem}\label{tau start int S1 is invariant}
For every unitary $u\in A$, the automorphism $\phi_u$ preserves the trace $\tau$.
\end{lem}
\begin{proof}
Let $B_+\subseteq A\star_\C C(S^1)$ be the linear span of elements of the form $z^{l_0}a_{1}z^{l_1}a_{2}\dots a_kz^{l_k}$ for $k\geq 0$, $l_i>0,$ $a_i\in A$. Let $B_+^*$ be the adjoints of elements in $B_+$. Every element in $B_+$ can be written as a finite sum of elements of the form $z^{l_0}a_{1}z^{l_1}a_{2}\dots a_kz^{l_k}$ with $l_i>0$ and $a_i\in \ker(\tau_A)\subseteq A$. Hence $\tau_{|B_+}=0$, furthermore for $k\geq 1$, $b_i\in B_{+}\coprod B_+^*$ and $a_i\in \ker(\tau_A)\subseteq A$, one has $$\tau(a_1b_1a_2\dots a_kb_k)=0.$$

Furthermore it follows from the the identity $\tau(ab)=\tau((a-\tau(a))b)+\tau(a)\tau(b)$ that $\tau(ab)=0$ for $a\in A$, $b\in B_+.$

We check that the properties of $\tau$, are also verified by $\tau\circ\phi_u$. The result then follows from uniqueness of the trace. 

\begin{enumerate}
\item If $a\in A$, then $\tau(\phi_u(a))=\tau(a)=\tau_A(a)$.
\item  If $k>0$, then $\phi_u(z^k)=(u^{-1}z)^k$. Hence $\phi_u(z^k)=u^{-1}x\in AB_+$. Therefore $\tau(\phi_u(z^k))=0$. By taking the adjoint it follows that $\tau(\phi_u(z^{-k}))=0$.
\item Any element $a_1z^{l_1}a_2z^{l_2}\cdots a_kz^{l_k}$ for $l_i\in \Z^*$ and $a_i\in \ker(\tau_A)\subseteq A$ can be written as $\alpha_1 x_1\alpha_2 x_2\dots \alpha_s x_s$ for some $s\geq 1$, $\alpha_i\in \ker(\tau_A)\subseteq A$ and $x_i\in B_+\coprod B_+^*$ such that $x_i$ alternatively belong to $B_+$ and $B_+^*$.

Suppose that $x_1\in B_+$. Since $\phi_u(B_+)=u^{-1}B_+$ and $\phi_u(B_+^*)=B_+^*u$, by writing $\phi_u(x_i)=u^{-1}y_i$ or $y_iu$, it follows that \begin{equation*}
\phi_u(\alpha_1 x_1\alpha_2 x_2\dots \alpha_s x_s)=\begin{cases}\alpha_1u^{-1}y_1\alpha_2y_2(u\alpha_3 u^{-1})y_3\dots y_su,\quad \text{if},\; u_s\in B_+^*\\\alpha_1u^{-1}y_1\alpha_2y_2(u\alpha_3 u^{-1})y_3\dots y_s,\quad \text{if},\; u_s\in B_+\end{cases}
\end{equation*}
It then follows that $\tau(\phi_u(\alpha_1 x_1\alpha_2 x_2\dots \alpha_s x_s)$ is equal to; \begin{align*}
\tau(\alpha_1u^{-1}y_1\alpha_2y_2(u\alpha_3 u^{-1})y_3\dots y_su)=\tau((u\alpha_1u^{-1})y_1\alpha_2y_2(u\alpha_3 u^{-1})y_3\dots y_s)\\=0
\end{align*}in the first case  and in the second we have \begin{align*}
\tau(\alpha_1u^{-1}y_1\alpha_2y_2(u\alpha_3 u^{-1})y_3\dots y_s)=\tau(\alpha_2y_2(u\alpha_3 u^{-1})y_3\dots \left(y_s\alpha_1 u^{-1}y_1\right))=0,
\end{align*}
where we used that $y_s\alpha_1 u^{-1}y_1\in B_+.$

Hence $\tau(a_1z^{l_1}a_2z^{l_2}\cdots a_kz^{l_k})=0$ in the case where $x_1\in B_+$. The case where $x_1\in B_+^*$ is handled similarly.\qedhere
\end{enumerate}
\end{proof}
It follows from \Cref{tau start int S1 is invariant} that $(A\star C(S^1))\rtimes \Gamma$, admits a tracial state $\tau\rtimes \tau_e$ defined by $\tau\rtimes\tau_e(\sum b_i\gamma_i)=\tau (b_{e})$, where $b_i\in A\star C(S^1).$
\end{proof}
\section{A morphism in $\KK$-theory with real coefficients}\label{sect global kk theory element}
In this section, we give the definition of a primitive element in the equivariant $\KK$-theory of the classifying space of trivialised unitary flat vector bundles.

\bigskip

Let $G$ be a compact Lie group. To avoid confusion, we will denote by $G$ the Lie group seen as a Lie group, $G^\delta$ the group $G$ with the discrete topology, $\mathbb{G}$ the space $G$ seen as a compact space.
\begin{dfn}\label{dfn morpho secondary KK compact}
The following morphism defined below is denoted $\Psi_G$
\begin{equation}\label{KK morph G to GR compact}
\Psi_G:\KK^*(\C,C(\mathbb{G}))\to \KK^{*}_{G^\delta,\R}(\C,C(\mathbb{G}))=\KK^{*}_{\mathbb{G}\rtimes G^\delta,\R}(C(\mathbb{G}),C(\mathbb{G})),
\end{equation}
where $G^\delta$ acts by right translation on $\mathbb{G}.$ The morphism is the successive composition of the following morphisms \begin{enumerate}
 \item Let $G$ act on $\mathbb{G}\times \mathbb{G}$ by the right diagonal action. The space $(\mathbb{G}\times \mathbb{G})/G$ is identified with $\mathbb{G}$ by using the map $(x,y)\to yx^{-1}$. It follows that we have a Morita equivalence from the $C^*$-algebra $C(\mathbb{G})$ to the $C^*$-algebra  $C(\mathbb{G}\times \mathbb{G})\rtimes G$. By Green-Julg theorem \cite{MR625361}, we obtain an isomorphism $$\KK^*(\C,C(\mathbb{G}))\to \KK^*_G(\C,C(\mathbb{G})\otimes C(\mathbb{G})).$$
 \item The forgetful map and then the inclusion map of $\KK$-theory inside $KK_\R$-theory $$\KK^*_{G}(\C,C(\mathbb{G}\times \mathbb{G}))\to \KK^*_{G^\delta,\R}(\C,C(\mathbb{G})\otimes C(\mathbb{G})).$$
\item By \Cref{KK theory inclusion- morita then R KK}, the Haar measure defines an element in $KK^0_{G^\delta,\R}(C(\mathbb{G}),\C)$. One takes the Kasparov product with this element (on the second copy on $C(\mathbb{G})$) to obtain a morphism $$\KK^*_{G^\delta}(\C,C(\mathbb{G})\otimes C(\mathbb{G}))\to KK^*_{G^\delta,\R}(\C,C(\mathbb{G}))$$
\item The equality $\KK^{*}_{G^\delta,\R}(\C,C(\mathbb{G}))=\KK^{*}_{\mathbb{G}\rtimes G^\delta,\R}(C(\mathbb{G}),C(\mathbb{G}))$ follows from \cref{final rem}.
 \end{enumerate}
\end{dfn}
\begin{rem}
 The composition of the morphism (\ref{KK morph G to GR compact}) with the forgetful morphism $\KK^{*}_{G^\delta,\R}(\C,C(\mathbb{G}))\to \KK^{*}_{\R}(\C,C(\mathbb{G}))$ is the inclusion morphism of $\Z$ in $\R$.
\end{rem}

\bigskip

Following our convention, $U_n$ denotes the Lie group, $\mathbb{U}_n$ the space, $U_n^\delta$ the discrete group. Let $M$ be a compact manifold, $\Gamma=\pi_1(M)$. The groupoids $M$ and $\tilde{M}\rtimes\Gamma$ are Morita equivalent. Since a flat vector bundle $V$ whose holonomy representation is $\phi:\Gamma\to U_n^\delta$ defines a continuous functor of groupoids $\tilde{M}\rtimes\Gamma\to U_n^\delta$ by sending $(x,\gamma)$ to $\phi(\gamma)$. It follows that when this morphism is composed with the Morita equivalence of $M$ and $\tilde{M}\rtimes\Gamma$, a flat vector bundle defines a generalised morphism\footnote{See \cite{MR925720} for the definition of a generalised morphism.} denoted by $f_\phi:M\to U_n^\delta.$

One sees easily that giving a trivialisation of a $\tilde{M}\times_\phi\C^n$ is the same thing as a map $\beta:\tilde{M}\to U_n$ such that $\beta(x\gamma)=\beta(x)\phi(\gamma)$ for every $x\in \tilde{M}$ and $\gamma\in\Gamma$. In particular if $V$ is trivialised flat vector bundle, then the following morphism \begin{align*}
 \tilde{M}\rtimes \Gamma\to U_n\rtimes U_n^\delta,\quad (x,\gamma)\to (\beta(x),\phi(\gamma)).
\end{align*} is a morphism of Lie groupoids. By composing with the Morita equivalence of $M$ and $\tilde{M}\rtimes\Gamma$, one obtains a generalised morphism denoted by $f_V:M\to U_n\rtimes U^\delta$.
\begin{theorem}
Let $G=U_n$, and $[Id]\in K^1(\mathbb{U}_n)$ be the identity element, $f:\tilde{M}\rtimes \Gamma\to U_n^\delta$ a trivialised unitary flat bundle. The pullback of $\Psi_{U_n}([Id])$ by $f$ which is an element in $\KK^1_{C(M),\R}(C(M),C(M))=K^1(M)\otimes \R$ is equal to the $\alpha$ invariant of $f$.

In other words $\Psi_{U_n}([Id])$ is a classifying element for Chern-Simons invariants in $\KK$-theory.
\end{theorem}
\begin{proof}
We will first describe $\Psi_{U_n}([Id])$. We will check that the final element obtained is the map $T$ constructed in \ref{prop chern simons M T V times W}. The result then follows from \Cref{main prop}. The following enumeration follows each successive composition starting with step $0$ to denote the element $[Id]\in K^1(\mathbb{U}_n)$.
\begin{enumerate}[start=0]
\item The identity element $[Id]\in K^1(\mathbb{U}_n)$ will be seen as a unitary automorphism of total space of the trivial bundle of rank $n$ over $\mathbb{U}_n$ given by $$\mathbb{U}_n\times\C^n\to \mathbb{U}_n\times\C^n,\quad  (x,v)\to (x,xv).$$
\item The first morphism changes this element to become a a unitary isomorphism from $\mathbb{U}_n\times \mathbb{U}_n\times\C^n\to \mathbb{U}_n\times \mathbb{U}_n\times \C^n$ sending $L(x,y,u)= (x,y,yx^{-1}v)$. This will be regarded as the composition of two isomorphisms $L=L_2L^{-1}_1$ \begin{align*}
L_i:\mathbb{U}_n\times \mathbb{U}_n\times \C^n\to \mathbb{U}_n\times \mathbb{U}_n\times\C^n\\ L_i(x_1,x_2,v)=(x_1,x_2,x_iv).
\end{align*}  Notice that the group $U_n$ acts trivially on $\C^n$ and $L$ is $U_n$-equivariant. The group $U_n$ doesn't act trivially on the $\C^n$ appearing in the domain of the maps $L_1$, and $L_2$. It acts by $z\cdot(x_1,x_2,v)=(x_1z^{-1},x_2z^{-1},zv).$ Both the maps $L_1$ and $L_2$ become equivariant for this action. We have \begin{align*}L_i(z\cdot(x_1,x_2,v))=L_i(x_1z^{-1},x_2z^{-1},zv)&=(x_1z^{-1},x_2z^{-1},x_iz^{-1}zv)\\&=(x_1z^{-1},x_2z^{-1},x_iv)\\&=z\cdot L_i(x_1,x_2,v).
\end{align*}
\item This is the forgetful map, only changing the topology in the last picture of the group $U_n$ to $U_n^\delta$.
\item One views the bundles $\mathbb{U}_n\times \mathbb{U}_n\times \C^n$ as a bundle over the first copy of $\mathbb{U}_n$ with coefficients in $C(\mathbb{U}_n)$. Applying \Cref{KK theory inclusion- morita then R KK} amounts to extending the coefficient algebra to $C(\mathbb{U}_n)\rtimes \mathbb{U}_n^\delta$.

\item We will use the notation of \Cref{prop chern simons M T V times W}, and let $P=\tilde{M}\times_{\Gamma}U_n$.

The pull back of the element obtained in step 3 by $\phi$, becomes the vector bundle $\C^n\otimes W$ over $M$. The middle vector bundle in step 1, becomes $V\otimes W$, $L_1^{-1}$ and $L_2$ become respectively $T_2\otimes Id_W$, and $T_1$. Applying the Morita equivalence between $M$ and $\tilde{M}\rtimes \Gamma$ finishes the proof.\qedhere
\end{enumerate}
\end{proof}
\begin{rem}
In most of this article compactness is not needed. Most notably, let $\phi:\pi_1(M)\to U_n$ be the holonomy representation of a trivialised unitary flat vector bundle on a not necessarily compact manifold, and $f_\phi:M\to U_n\rtimes U_n^\delta$ the corresponding generalised morphism. The element $f^*\Phi_{U_n}(Id_n)$ is an element in $\KK^1_{M,\R}(C_0(M),C_0(M))$. This later group is isomorphic to the $K$-theory with real coefficients without compact support as proved in proposition 2.20 \cite{MR918241}.
\end{rem}

\bibliographystyle{plain}
\end{document}